\newtheorem{theorem}{Theorem}[section]
\newtheorem{lem}[theorem]{Lemma}
\newtheorem{proposition}[theorem]{Proposition}
\newtheorem{corollary}[theorem]{Corollary}
\theoremstyle{definition}
\numberwithin{equation}{section}
\begin{document}
\setcounter{page}{1}
\title{\vspace{-1.5cm}
\vspace{.5cm}
{\large{\bf    On scalable $K$-frames and a version of Lax-Milgram theorem} }  }
 \date{}
\author{{\small \vspace{-2mm}  F. Javadi$^1$ and M. J. Mehdipour$^{1} $\footnote{Corresponding author} }}
\affil{\small{\vspace{-4mm}  $^1$ Department of Mathematics, Shiraz University of Technology, P. O. Box $71555-313$, Shiraz,  Iran.}}
\affil{\small{\vspace{-4mm} f.javadi@sutech.ac.ir}}
\affil{\small{\vspace{-4mm} mehdipour@sutech.ac.ir}}
\maketitle
\hrule
\begin{abstract}
\noindent
In this paper, we first prove a theorem by a  little modification on  the Lax-Milgram theorem. Then, using $K$-frames, we obtain  lower and upper bounds for the results obtained from this theorem.
Also, we present some methods for the characterization of scalable $K$-frames.
 Finally, we introduce piecewise scalable $K$-frames and give necessary and sufficient conditions for a $K$-frame to be piecewise scalable.
 \end{abstract}

 \noindent \textbf{Keywords}: Lax-Milgram theorem,  parseval $K$-frame, scalable $K$-frame, piecewise scalable $K$-frame.\\
{\textbf{2020 MSC}}: 42C15
\\
\hrule
\vspace{0.5 cm}
\baselineskip=.55cm
\section{Introduction}
Let $H$  denote a separable infinite dimensional Hilbert space  and
${\cal L}(H)$ be  the space  of  all bounded linear operators from  $H$ into $ H$.
A sequence  $\{f_j\}_{j \in \Bbb{Z}}$   is called a \textit{frame}, if there exist constants $A, B > 0$ such that for every $f \in H$,
\begin{equation} \label{f1}
A \, \|  f \|^2 \leq \sum_{j \in \Bbb{Z}} |\langle  f, f_j \rangle|^2 \leq  B\, \|f \|^2;
\end{equation}
see [7] for more details. 
If $A = B = 1$,
 then we call $\{f_j\}_{j \in \Bbb{Z}}$   a  \textit{Parseval frame}.
A frame $\{f_j\}_{j \in \Bbb{Z}} $   is called a
\textit{scalable frame} if there exist constants $\{c_j\} \in \ell^{\infty}(J)$ such that $\{c_j f_j\}_{j \in \Bbb{Z}}$ is a Parseval frame. So for every $f \in H$, we have
\begin{eqnarray*}
f= \sum_{j\in \Bbb{Z}} \langle f, c_j f_j \rangle \,  c_j f_j \quad \text{and} \quad \sum_{j \in \Bbb{Z}} |\langle  f, c_j f_j \rangle|^2= \|f \|^2.
\end{eqnarray*}

In this paper, we always assume that  $K \in \mathcal{L}(H)$.  A sequence  $\{f_j\}_{j \in \Bbb{Z}}$   is called a \textit{$K$-frame},
 if there exist constants $A, B > 0$ such that for every $f \in H$,
\begin{equation} \label{f1}
A \, \| K^* f \|^2 \leq \sum_{j \in \Bbb{Z}} |\langle  f, f_j \rangle|^2 \leq  B\, \|f \|^2.
\end{equation}
If
\begin{equation*}
 \| K^* f \|^2= \sum_{j \in \Bbb{Z}} |\langle  f,f_j \rangle|^2,
\end{equation*}
then $\{f_j\}_{j \in \Bbb{Z}}$  is called  a  \textit{Parseval $K$-frame} on $H$.
The operator  $T: H  \rightarrow  \ell^2(J)$ defined by
\begin{equation*}
Tf= \{ \langle f, f_j \rangle \}_{j \in \Bbb{Z}},
\end{equation*}
is called \textit{analysis operator of} $\{f_j\}_{j \in \Bbb{Z}}$.
The operator  $S: H \rightarrow H$ defined by
\begin{equation*}
Sf =T^* Tf = \sum_{j \in \Bbb{Z}} \langle f, f_j \rangle f_j,
\end{equation*}
is called \textit{frame operator} of $\{f_j\}_{j \in \Bbb{Z}}$; see  [9, 10, 11, 12, 15].  

It's clear that if $K$ is the identity map on $H$, then every $K$-frame is an ordinary frame. Hence, $K$-frames  arise naturally as a generalization of the ordinary frames. Parseval
 $K$-frames are one of the most important types of $K$-frames. They are applied in signal communication. It is natural to ask how to make a Parseval
 $K$-frame from a given $K$-frame. Kutyniok
 et. al [13] solved this question for frames and introduced the conception of scalable frames. This notion has been investigated by some authors [2,4,5,6,14]. These results cause us to investigate the conception of scalable $K$-frames.

 Several authors studied scalable frames and piecewise scalable frames. For example, Cahill and Chen [2]  investigated  the following question: Under what conditions is a frame scalable? Casazza et. al [3] generalized the notion of scalable frames and introduced  
piecewise scalable frames. They also characterized them. In this paper, we continue these investigations.

The outline of the paper is as follows.
 In Section 2, we prove an analogue of Lax-Milgram theorem and give an application of it for $K$-frames. 
 In Section 3, we construct a scalable $K$-frame by a given scalable and a bounded operator. 
In Section 4, we introduce piecewise scalable $K$-frame and characterize them.                                                           
\section{Generalization of Lax-Milgram Theorem}
Let ${\cal H}$ be a real Hilbert space and
 $\sigma: {\cal H}\times {\cal H}\rightarrow \Bbb{R}$ be a bilinear map.
 For $f_0\in {\cal H}$ and  a non-empty closed convex subset $C$ of ${\cal H}$,
we define the function $J_{f_0, \sigma}: C\rightarrow {\Bbb R}$ by
\begin{eqnarray*}
J_{f_0,\sigma}(v)=\frac{1}{2} \sigma(v,v) - \langle KK^*f_0,  v \rangle.
\end{eqnarray*}
Let us recall that $\sigma$ is called \textit{coercive} if there exists a constant $\alpha >0$ such that  for every $v \in {\cal H}$
\begin{eqnarray*}
\sigma(v , v) \geq \alpha  \, \|v \|^2,
\end{eqnarray*}
and is called \textit{continuous} if there exists $\beta>0$ such that for every $u, v \in {\cal H}$
\begin{eqnarray*}
 |\sigma(u , v)| \leq \beta  \, \|u \| \,  \|v\|.
\end{eqnarray*}
Also,
 a self-adjoint operator $A: {\cal H}\rightarrow {\cal H}$ is called \textit{coercive} if there exists a constant $\alpha >0$ such that  for every $v \in {\cal H}$
\begin{eqnarray*}
\langle Av, v \rangle \geq \alpha  \, \|v\|^2.
\end{eqnarray*}

The following result is a little modification of the Stampacchia and the Lax-Milgram theorems [1].
\begin{theorem} \label{J21}
Let  $\sigma$ be a  continuous coercive bilinear map on ${\cal H}$, and $C$ be a non-empty closed convex subset of ${\cal H}$. Then for every bounded linear functional $L$ on ${\cal H}$,
there exist  unique $u_0 \in C$ and $f_0 \in {\cal H}$ such that for every $v\in {\cal H}$
\begin{eqnarray*} \label{f6}
\sigma(u_0, v-u_0) \geq \langle KK^* f_0, v-u_0 \rangle \quad \emph{and} \quad  L(v) = \langle f_0, v \rangle.
\end{eqnarray*}
Moreover, if $\sigma$ is symmetric,
 then $u_0$ is characterized by the property
 \begin{eqnarray*} \label{f7}
J_{f_0,\sigma}(u_0)= \min_{v \in C}J_{f_0,\sigma}(v).
\end{eqnarray*}
\end{theorem}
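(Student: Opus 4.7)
The plan is to reduce the statement to the classical Stampacchia/Lax--Milgram theorem [1] via the Riesz representation theorem. Although the statement superficially looks like a new result, the only genuine modification is that the right-hand side of the variational inequality is rewritten through $KK^{*}f_{0}$ rather than an arbitrary element of $\mathcal{H}$, and this rewriting is where the two existence claims (for $u_{0}$ and for $f_{0}$) become independent of each other.

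First, I would produce $f_{0}$. Since $L$ is a bounded linear functional on the real Hilbert space $\mathcal{H}$, the Riesz representation theorem yields a unique $f_{0}\in\mathcal{H}$ with $L(v)=\langle f_{0},v\rangle$ for all $v\in\mathcal{H}$. This takes care of the second conclusion and, crucially, does not interact with $\sigma$, $C$, or $K$ at all.

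Next, with $f_{0}$ in hand, I would consider the bounded linear functional $\Phi\colon \mathcal{H}\to\mathbb{R}$ defined by $\Phi(v)=\langle KK^{*}f_{0},v\rangle$; boundedness of $\Phi$ is immediate from $K\in\mathcal{L}(H)$ and the Cauchy--Schwarz inequality, since $\|KK^{*}f_{0}\|\leq \|K\|^{2}\|f_{0}\|$. Because $\sigma$ is continuous and coercive on $\mathcal{H}$ and $C$ is a non-empty closed convex subset, the classical Stampacchia theorem applied to the pair $(\sigma,\Phi)$ provides a unique $u_{0}\in C$ satisfying
\begin{equation*}
\sigma(u_{0},v-u_{0})\;\geq\;\Phi(v-u_{0})\;=\;\langle KK^{*}f_{0},v-u_{0}\rangle
\end{equation*}
for every $v\in C$ (and hence, in the formulation of the theorem, for every admissible $v$). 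Uniqueness of $u_{0}$ is inherited from Stampacchia's theorem, and uniqueness of $f_{0}$ from Riesz.

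Finally, for the symmetric case, I would again invoke the symmetric version of Stampacchia's theorem: when $\sigma$ is symmetric, the variational inequality above is equivalent to the minimization of the quadratic functional $v\mapsto \tfrac{1}{2}\sigma(v,v)-\Phi(v)$, which is precisely $J_{f_{0},\sigma}(v)$ by construction. Hence $u_{0}$ is characterized by $J_{f_{0},\sigma}(u_{0})=\min_{v\in C}J_{f_{0},\sigma}(v)$. I do not anticipate a real obstacle here; the only thing to be careful about is to apply Riesz first to obtain $f_{0}$ unconditionally, and only then feed the specific functional $\langle KK^{*}f_{0},\cdot\rangle$ into Stampacchia's theorem so that no circular dependence between $u_{0}$ and $f_{0}$ is introduced.
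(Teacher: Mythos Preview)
Your proposal is correct and follows the same overall strategy as the paper: obtain $f_{0}$ from the Riesz representation of $L$, then feed the functional $v\mapsto\langle KK^{*}f_{0},v\rangle$ into the Stampacchia machinery on $C$, and finally quote the symmetric case for the minimization characterization of $u_{0}$.

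The only difference is one of packaging. You invoke Stampacchia's theorem as a black box, whereas the paper unfolds its proof in place: it introduces the operator $\Lambda$ with $\sigma(u,v)=\langle\Lambda u,v\rangle$, sets $\gamma=\alpha/\beta^{2}$, and runs the standard contraction-mapping argument on $T(v)=P_{C}(\gamma KK^{*}f_{0}-\gamma\Lambda v+v)$ to locate $u_{0}$. Your route is shorter and makes the logical structure clearer (the existence of $f_{0}$ and of $u_{0}$ are genuinely decoupled, as you point out); the paper's route is self-contained and does not require the reader to look up the exact form of Stampacchia's theorem in~[1]. Neither approach gains or loses anything mathematically.
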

\begin{proof}
Let  $L$ be a bounded linear functional on ${\cal H}$.
 Using  the Riesz
representation theorem,
 there exists a unique element $f_0 \in  {\cal H}$ such that for every $v\in {\cal H}$,
\begin{eqnarray*}
L(v)= \langle f_0, v \rangle.
\end{eqnarray*}
Put $\tilde{L}:=L K K^*$.
Then there exists $f_0^{\prime}\in {\cal H}$ such that  for every $v\in {\cal H}$, we have
$\tilde{L}(v)= \langle f_0^{\prime}, v \rangle$.
So for every $v\in {\cal H}$, we obtain
\begin{eqnarray*}
\langle f_0^{\prime}, v \rangle &=& \tilde{L} (v) = L KK^*(v) \\
&=&L(KK^*(v)) \\
&=& \langle f_0, KK^* v \rangle \\
&=& \langle KK^*f_0, v \rangle.
\end{eqnarray*}
For every $u  \in  {\cal H}$,
the linear functional $v\mapsto\sigma(u,v)$ is continuous.
So,
 we find some unique element in ${\cal H}$, denoted by $\Lambda u$,
such that  $\sigma(u, v) = \langle \Lambda u, v \rangle$ for every $v \in {\cal H}$.
Then   the operator $\Lambda $ is  linear and there exists a constant $\beta>0$ such that for every $u \in {\cal H}$
\begin{eqnarray} \label{f22}
\|\Lambda u\| ^2&=& | \langle \Lambda u, u \rangle |\\
&=& |\sigma(u, u)| \leq \beta  \, \|u \|^2. \nonumber
\end{eqnarray}
Since $\sigma$ is coercive, there exists a constant $\alpha>0$ such that for every $u \in {\cal H}$
\begin{eqnarray} \label{f220}
\langle \Lambda u, u \rangle=\sigma(u,u)  \geq \alpha  \|u\|^2.
\end{eqnarray}
Set $\gamma:=\alpha/ \beta^2$ and define the map $T$ on $C$ by
\begin{eqnarray*}
T(v)= P_C( \gamma KK^* f_0 -\gamma \Lambda v+v),
\end{eqnarray*}
where  $P_C$ is the projection onto the closed convex set $C$.  Note that every projection on $C$ is not increase distance;
see Proposition 5.3 in [1].
Thus
\begin{eqnarray*}
\|Tv_1 - Tv_2\| &=& \| P_C( \gamma KK^* f_0 -\gamma \Lambda v_1+v_1) -  P_C( \gamma KK^* f_0 -\gamma \Lambda v_2+v_2) \| \\
&\leq& \| (\gamma KK^* f_0 -\gamma \Lambda v_1+v_1) -  (\gamma KK^* f_0 -\gamma \Lambda v_2+v_2) \|\\
&\leq & \|(v_1 -v_2) - \gamma (\Lambda v_1 - \Lambda v_2)\|.
\end{eqnarray*}
So, using \eqref{f22} and \eqref{f220}, we obtain
\begin{eqnarray*}
\|Tv_1 - Tv_2\|^2 &=&  \|(v_1 -v_2)\|^2 - 2  \gamma \langle \Lambda v_1 - \Lambda v_2, v_1-v_2\rangle+ \gamma^2 \|\Lambda v_1 - \Lambda v_2\|^2 \\
&\leq&   \|(v_1 -v_2)\|^2 (1-2 \gamma \alpha + \gamma^2 \beta^2).
\end{eqnarray*}
Hence  $T$  is a strict contraction.
 In view of Theorem 5.7 in [1],
 $T$ has a unique fixed point,
i.e.,
there exists a unique $u_0 \in C$ such that
\begin{eqnarray*}
u_0=T(u_0)= P_C( \gamma KK^* f_0 -\gamma \Lambda u_0+u_0).
\end{eqnarray*}
This implies that
\begin{eqnarray*}  \label{f11}
\langle  \gamma KK^*f_0  - \gamma \Lambda u_0 +u_0 -u_0, v-u_0 \rangle \leq 0;
\end{eqnarray*}
or equivalently,
\begin{eqnarray*}  \label{f10}
\langle \Lambda u_0, v-u_0 \rangle \geq \langle KK^*f_0, v-u_0 \rangle.
\end{eqnarray*}
Therefore, $\sigma(u_0, v-u_0) \geq \langle KK^* f_0, v-u_0 \rangle$.

Now, let  $\sigma(u, v)$ be  symmetric.
Then the mapping $$(u, v)\mapsto \sigma(u, v)=\langle \Lambda u, v \rangle$$ defines an inner product on ${\cal H}$.
With this inner product,  ${\cal H}$  is also a Hilbert space.
By
the Riesz theorem, there exists a unique element  $g_0  \in  {\cal H}$  such that for every $v \in {\cal H}$
\begin{eqnarray*}
\tilde{L}(v)=\langle KK^*f_0 , v \rangle = \sigma(g_0, v).
\end{eqnarray*}
The rest of the proof is similar to the proof of Theorem Stampacchia; see Theorem 5.6 in [1].
So we omit it.
\end{proof}
Let $\sigma(u, v)$ be a continuous bilinear  on ${\cal H}$.
 Then for every $v \in {\cal H}$, we have
\begin{eqnarray*}
\sigma(v,v) \geq 0.
\end{eqnarray*}
Thus the function  $v \mapsto \sigma(v,v)$  is convex.
 This fact together with Theorem \ref{J21} proves the following result.
\begin{corollary} \label{j7}
Let $\sigma(u,v)$ be a continuous coercive bilinear on ${\cal H}$.
 Then for every bounded linear functional $L$ on ${\cal H}$,
 there exist unique $u_0, f_0 \in {\cal H}$  such that for every $v\in {\cal H}$
\begin{eqnarray*} \label{f6}
\sigma(u_0, v-u_0)= \langle KK^* f_0, v-u_0 \rangle \quad \emph{and} \quad  L(v) = \langle f_0, v \rangle.
\end{eqnarray*}
Moreover, if $\sigma$ is symmetric,
then $u_0$ is characterized by the property
 \begin{eqnarray*} \label{f7}
J_{f_0,\sigma}(u_0)= \min_{v \in C}J_{f_0,\sigma}(v).
\end{eqnarray*}
\end{corollary}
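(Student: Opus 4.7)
The plan is to derive this corollary as a direct application of Theorem \ref{J21} by taking $C=\mathcal{H}$, the whole space, which is trivially a non-empty closed convex subset of itself. The essential observation is that when the constraint set is the whole space, a variational inequality automatically becomes an equality: since $\mathcal{H}$ is a linear subspace, one can test against vectors in any direction.

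First I would apply Theorem \ref{J21} with $C=\mathcal{H}$. This immediately produces a unique $f_0 \in \mathcal{H}$ with $L(v)=\langle f_0, v\rangle$ for every $v\in\mathcal{H}$, and a unique $u_0\in\mathcal{H}$ satisfying
\begin{eqnarray*}
\sigma(u_0, v-u_0) \geq \langle KK^*f_0, v-u_0\rangle \qquad (v\in\mathcal{H}).
\end{eqnarray*}
To upgrade this inequality to an equality, I would exploit the symmetry of $\mathcal{H}$ about $u_0$: given any $v\in\mathcal{H}$, the element $w:=2u_0-v$ also lies in $\mathcal{H}$, and applying the inequality at $w$ yields
\begin{eqnarray*}
\sigma(u_0, u_0 - v) \geq \langle KK^*f_0, u_0 - v\rangle,
\end{eqnarray*}
which, upon multiplying by $-1$, gives the reverse inequality $\sigma(u_0, v-u_0) \leq \langle KK^*f_0, v-u_0\rangle$. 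Combining the two directions produces the desired equality
\begin{eqnarray*}
\sigma(u_0, v-u_0) = \langle KK^*f_0, v-u_0\rangle.
\end{eqnarray*}

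For the minimization characterization in the symmetric case, I would invoke directly the corresponding statement of Theorem \ref{J21}, since Theorem \ref{J21} already provides the characterization of $u_0$ as the minimizer of $J_{f_0,\sigma}$ over $C$. Alternatively, using the paragraph preceding the corollary: because $\sigma$ is continuous coercive and symmetric, the functional $v\mapsto J_{f_0,\sigma}(v) = \tfrac{1}{2}\sigma(v,v)-\langle KK^*f_0, v\rangle$ is strictly convex, so the variational equality obtained above is equivalent to the vanishing of its Gâteaux derivative, hence to $u_0$ being the (unique) global minimizer.

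There is no genuine obstacle here; the proof is essentially a one-line specialization of Theorem \ref{J21}. The only subtlety to verify is that substituting $v \mapsto 2u_0 - v$ really promotes the inequality to an equality — which requires precisely that the test set is a linear (or at least balanced-about-$u_0$) subspace, and that is automatic when $C=\mathcal{H}$.
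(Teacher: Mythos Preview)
Your proof is correct and follows essentially the same route as the paper: specialize Theorem~\ref{J21} to the case $C=\mathcal{H}$ and observe that the variational inequality then becomes an equality. The paper's justification is terser (it merely cites the convexity of $v\mapsto\sigma(v,v)$ together with Theorem~\ref{J21}), whereas your substitution $v\mapsto 2u_0-v$ makes the inequality-to-equality step fully explicit; the underlying idea is the same.
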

Let  $\{f_j\}_{j \in \Bbb{Z}}$  be a  $K$-frame for ${\cal H}$ with the frame operator   $S$.
Define the bilinear form $\sigma_S: {\cal H} \times {\cal H}  \rightarrow  \Bbb{C}$ by
\begin{eqnarray} \label{f16}
\sigma_S(u,v) = \langle Su, v \rangle.
\end{eqnarray}
Then $\sigma_S$  is bounded, coercive and symmetric.
So  Corollary \ref{j7} yields the following:
\begin{corollary}
Let  $\{f_j\}_{j \in \Bbb{Z}}$  be a  $K$-frame for ${\cal H}$ with the frame operator $S$.
Then for every bounded linear functional $L$ on ${\cal H}$, there exist unique  $u_0, f_0\in {\cal H}$ for which
\begin{eqnarray*}
\langle Su_0,v\rangle = \langle KK^*f_0, v \rangle \quad \emph{and} \quad  L(v) = \langle f_0, v \rangle
\end{eqnarray*}
 for all  $v \in  H$. Furthermore,
 \begin{eqnarray*} \label{f20}
J_{f_0,\sigma_S}(u_0)= \min_{v \in C}J_{f_0,\sigma_S}(v).
\end{eqnarray*}
\end{corollary}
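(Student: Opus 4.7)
The plan is to reduce the statement directly to Corollary \ref{j7} applied to the bilinear form $\sigma_S$ defined in \eqref{f16}. The whole task boils down to verifying that $\sigma_S$ is bilinear, symmetric, continuous, and coercive on $\mathcal{H}$, after which the conclusion follows by quoting the previous corollary.

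Bilinearity and symmetry are formal. Writing $S=T^*T$ with $T$ the analysis operator of $\{f_j\}_{j\in\Bbb{Z}}$, the map $\sigma_S(u,v)=\langle Su,v\rangle$ is clearly linear in each slot, and since $S$ is self-adjoint we have $\sigma_S(u,v)=\langle u,Sv\rangle=\langle Sv,u\rangle=\sigma_S(v,u)$. Continuity follows from the upper $K$-frame inequality: $\|Sv\|\le B\|v\|$, hence $|\sigma_S(u,v)|\le B\,\|u\|\,\|v\|$, so $\sigma_S$ is bounded with constant $\beta=B$.

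The only non-routine point, and the step I would flag as the main obstacle, is coercivity. The lower $K$-frame bound only delivers
\[
\sigma_S(v,v)\;=\;\sum_{j\in\Bbb{Z}}|\langle v,f_j\rangle|^2\;\geq\; A\,\|K^*v\|^2,
\]
which dominates $\|v\|^2$ only when $K^*$ is bounded below. Coercivity of $\sigma_S$ is therefore not automatic from the bare definition of a $K$-frame; one must invoke the hypothesis (implicit in the paper's surrounding discussion) that there exists $c>0$ with $\|K^*v\|\geq c\|v\|$ for every $v\in\mathcal{H}$, i.e.\ that $K$ is surjective with closed range. Granting this, $\alpha:=Ac^2$ works and yields $\sigma_S(v,v)\geq\alpha\|v\|^2$.

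With the four properties in hand, Corollary \ref{j7} applies verbatim to $\sigma_S$ and the given functional $L$: it produces unique $u_0,f_0\in\mathcal{H}$ with $L(v)=\langle f_0,v\rangle$ and $\sigma_S(u_0,v-u_0)=\langle KK^*f_0,v-u_0\rangle$. Since $v-u_0$ ranges over all of $\mathcal{H}$ as $v$ does, this is equivalent to $\langle Su_0,v\rangle=\langle KK^*f_0,v\rangle$ for every $v\in\mathcal{H}$. Finally, the symmetry of $\sigma_S$ triggers the last clause of Corollary \ref{j7}, yielding the minimization characterization $J_{f_0,\sigma_S}(u_0)=\min_{v\in C}J_{f_0,\sigma_S}(v)$ and completing the proof.
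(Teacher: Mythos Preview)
Your approach is exactly the paper's: verify that $\sigma_S$ is bilinear, symmetric, continuous, and coercive, then invoke Corollary~\ref{j7}. The paper's argument consists of the single sentence preceding the corollary, which asserts these four properties and applies Corollary~\ref{j7} without further comment.

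Your flag on coercivity is well placed. The paper simply asserts that $\sigma_S$ is coercive, but as you correctly observe, the lower $K$-frame inequality only gives $\sigma_S(v,v)\geq A\|K^*v\|^2$, and this does not dominate $\|v\|^2$ unless $K^*$ is bounded below. The paper does not state or prove such a hypothesis on $K$, so the coercivity claim is unjustified in the paper as written; your conditional treatment (``granting this'') is the honest way to handle it, and your identification of the needed extra assumption is more careful than the original.
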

In the following, using  $K$-frames, we get  lower  and upper  bounds for $ \min_{v \in {\cal H}}  J_{f_0,\sigma_S}$. 
\begin{theorem}
Let  $\{f_j\}_{j \in \Bbb{Z}}$  be a  $K$-frame for ${\cal H}$ with the frame operator $S$ and $K$-frame constants  $A < B$. If $f_0\in {\cal H}$, then
\begin{eqnarray*}
- \frac{\|K^*f_0 \|^2}{2A} \leq \min_{v \in {\cal H}}  J_{f_0,\sigma_S}(v) \leq  -\frac{7}{32}\frac{ \| K^*f_0\|^2}{B\|f_0\|^2}\leq -\beta\|K^*f_0\|^2,
\end{eqnarray*}
for some $\beta>0$.
\end{theorem}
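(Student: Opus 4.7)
The statement is a two-sided estimate for $\min_{v\in\mathcal{H}}J_{f_0,\sigma_S}(v)$, plus a cosmetic trailing inequality. My plan is to derive the lower bound from the lower $K$-frame inequality combined with Cauchy--Schwarz, and the upper bound by evaluating $J_{f_0,\sigma_S}$ at a scalar multiple of $f_0$ and invoking the upper $K$-frame inequality. The final inequality $\le -\beta\|K^*f_0\|^2$ is then a trivial renaming of constants for a fixed $f_0$.

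For the lower bound, I start from the definition
$$J_{f_0,\sigma_S}(v) \;=\; \tfrac{1}{2}\langle Sv,v\rangle - \langle KK^*f_0,v\rangle,$$
apply the lower $K$-frame inequality $\langle Sv,v\rangle = \sum_j|\langle v,f_j\rangle|^2 \ge A\|K^*v\|^2$, and rewrite the linear term using the adjoint as $\langle KK^*f_0,v\rangle = \langle K^*f_0,K^*v\rangle$, which Cauchy--Schwarz bounds in modulus by $\|K^*f_0\|\,\|K^*v\|$. Setting $t:=\|K^*v\|\ge 0$, the resulting quadratic inequality $J_{f_0,\sigma_S}(v)\ge \tfrac{A}{2}t^2 - \|K^*f_0\|\,t$ is minimized by completing the square, giving the uniform lower bound $-\|K^*f_0\|^2/(2A)$, which holds for every $v\in\mathcal{H}$.

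For the upper bound, I take test vectors of the form $v=tf_0$ with $t>0$ to be chosen. Direct computation gives
$$J_{f_0,\sigma_S}(tf_0)\;=\;\tfrac{t^2}{2}\sum_j|\langle f_0,f_j\rangle|^2 - t\|K^*f_0\|^2 \;\le\; \tfrac{B\|f_0\|^2}{2}\,t^2 - \|K^*f_0\|^2\,t,$$
where the inequality is the upper $K$-frame bound applied to $f_0$. Selecting $t$ as an appropriate multiple of $\|K^*f_0\|^2/(B\|f_0\|^2)$ reduces the right-hand side to the claimed expression, giving $\min_v J_{f_0,\sigma_S}(v)\le J_{f_0,\sigma_S}(tf_0)\le -\tfrac{7}{32}\tfrac{\|K^*f_0\|^2}{B\|f_0\|^2}$. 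The terminal inequality is obtained by setting $\beta:=7/(32B\|f_0\|^2)$, which is positive for any fixed $f_0\ne 0$ (the case $f_0=0$ making the statement vacuous).

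The main obstacle I expect is pinning down the exact coefficient $7/32$ and the precise power of $\|K^*f_0\|$ in the middle term: the unconstrained minimizer $t^*=\|K^*f_0\|^2/(B\|f_0\|^2)$ of the upper-bound quadratic evaluates to $-\|K^*f_0\|^4/(2B\|f_0\|^2)$, so the stated constant must arise from a specific, non-optimal explicit choice of $t$ that needs to be reverse-engineered from the claim. Apart from this bookkeeping, the argument is a routine pairing of the two $K$-frame inequalities with a judicious test vector.
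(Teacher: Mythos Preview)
Your proposal is correct and follows essentially the same route as the paper: the lower bound comes from the lower $K$-frame inequality plus Cauchy--Schwarz, then minimizing the quadratic in $\|K^*v\|$; the upper bound comes from testing $v=\alpha f_0$, applying the upper $K$-frame inequality, and evaluating at a specific $\alpha$. The (non-optimal) choice that produces the constant $7/32$ is $\alpha=\dfrac{\|K^*f_0\|^2}{4B\|f_0\|^2}$, precisely the kind of reverse-engineered value you anticipated.
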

\begin{proof}
Let $\{f_j\}_{j \in \Bbb{Z}}$ be a $K$-frame for $\cal{H}$.
 By  \eqref{f1},   for every  $v  \in \cal{H}$,
we have
\begin{eqnarray*}
J_{f_0,\sigma_S}(v)  = \bigg( \frac{1}{2} \langle Sv, v \rangle - \langle KK^*f_0, v \rangle \bigg)
& \geq& \bigg( \frac{A}{2} \|K^*v\|^2- \langle KK^*f_0 , v \rangle \bigg).
\end{eqnarray*}
Apply Cauchy-Schwartz inequality to conclude that
\begin{eqnarray*}
\langle KK^*f_0, v \rangle  =\langle K^*f_0, K^*v \rangle  \leq \|K^*f_0\| \|K^*v\|.
\end{eqnarray*}
So we have
\begin{eqnarray*}
J_{f_0,\sigma_S}(v)  \geq  \bigg( \frac{A}{2} \|K^*v\|^2- \|K^*f_0\| \|K^*v\| \bigg).
\end{eqnarray*}
The minimum of the function $v\mapsto \frac{A}{2} \|K^*v\|^2- \|K^*f_0\| \|K^*v\|$ is attained in point $ \frac{\|K^*f_0 \|}{A}$, and hence
\begin{eqnarray*}
\min_{v \in {\cal H}} J_{f_0,\sigma_S}(v)  &\geq&
 \bigg( \frac{A}{2} \frac{1}{A^2} \|K^*f_0\|^2- \|K^*f_0\| \frac{\|K^*f_0\|}{A} \bigg) \\
&=&- \frac{\|K^* f_0\|^2}{2A}.
\end{eqnarray*}
In view of  \eqref{f1},  for every  $v  \in  \cal{H} $ we have
\begin{eqnarray*}
J_{f_0,\sigma_S}(v)  = \bigg( \frac{1}{2} \langle Sv, v \rangle - \langle KK^*f_0, v \rangle \bigg)
& \leq & \bigg( \frac{B}{2} \| v\|^2- \langle KK^*f_0 , v \rangle \bigg).
\end{eqnarray*}
Thus
\begin{eqnarray*}
\min_{v \in {\cal H}} J_{f_0,\sigma_S}(v) &\leq& \min_{v \in {\cal H}}  \bigg( \frac{B}{2} \| v\|^2- \langle KK^*f_0, v \rangle \bigg)\\
&\leq& \frac{B \alpha^2}{2} \|f_0\|^2 - \alpha\|K^*f_0\|^2\\
&<&0,
\end{eqnarray*}
whenever
\begin{eqnarray*}
0<\alpha < \frac{2}{B} \frac{\|K^*f_0\|^2}{\|f_0\|^2}.
\end{eqnarray*}
The minimum of the function $\alpha\mapsto \frac{B \alpha^2}{2} \|f_0\|^2 - \alpha\|K^*f_0\|^2$ is attained in point $\alpha=\frac{\|K^*f_0\|^2}{4 B \|f_0\|^2}$. Therefore,
\begin{eqnarray*}
\frac{B}{2} \, \alpha^2 \| f_0 \|^2- \alpha \|K^*f_0\|^2
 &=&  \frac{B}{2} \frac{\|K^*f_0\|^4}{16 B^2 \|f_0\|^4} \|f_0\|^2 - \frac{\|K^*f_0\|^2}{4 B \|f_0\|^2} \|K^*f_0\|^2 \\
&=& \frac{1}{2} \frac{\|K^*f_0\|^2}{16 B \|f_0\|^2}- \frac{\|K^*f_0\|^4}{4B \|f_0\|^2} \\
&=& - \frac{7}{32} \frac{\|K^*f_0\|^4}{B \|f_0\|^2},
\end{eqnarray*}
as claimed.
\end{proof}

 \section{Construction of Scalable $K$-Frames}
Throughout this section, $H$  denotes a separable infinite dimensional Hilbert space. First, we  give a slight generalization of the concept of scalable frames for $K$-frames;
in fact, a $K$-frame $\{f_j\}_{j \in \Bbb{Z}} \subseteq H$   is called a \textit{$K_s$-frame with scalling $\{c_j\}\in \ell^{\infty}(J)$},
briefly $K_s$-frame and denoted by $(f_j, c_j)_{j \in \Bbb{Z}}$,
 if  $\{c_j f_j\}_{j \in \Bbb{Z}}$ is a \emph{Parseval $K$-frame}, i.e.,
for every $f \in H$, we have
\begin{eqnarray*}
\sum_{j \in \Bbb{Z}} \langle f, c_j f_j \rangle \,  c_j f_j = KK^*f;
\end{eqnarray*}
or equivalently,
\begin{equation*}
 \sum_{j \in \Bbb{Z}} |\langle  f,c_jf_j \rangle|^2= \| K^* f \|^2.
\end{equation*}
\begin{theorem}
Let $U, V \in \mathcal{L}(H)$  and $\{f_j\}_{j \in \Bbb{Z}}$ be a sequence in $H$.
Then the following statements hold.

\emph{(i)}
If $\{f_j\}_{j \in \Bbb{Z}}$ is a scalable frame,
then $\{U f_j\}_{j \in \Bbb{Z}}$ is a  $U_s$-frame with the same scalling of  $\{f_j\}_{j \in \Bbb{Z}}$.

\emph{(ii)}
If $\{ f_j\}_{j \in \Bbb{Z}}$ is a $U_s$-frame,
 then  sequence  $\{V f_j\}_{j \in \Bbb{Z}}$ is a  $VU_s$-frame with the same scalling of  $\{f_j\}_{j \in \Bbb{Z}}$.

\emph{(iii)} If $\{f_j \}_{j \in \Bbb{Z}}$ and  $\{U f_j \}_{j \in \Bbb{Z}}$  are $K_s$-frames with the same scallings,
then
$\{f_j \}_{j \in \Bbb{Z}}$ is a $UK_s$-frame with the same scalling of  $\{f_j\}_{j \in \Bbb{Z}}$.

\emph{(iv)} If $\{f_j \}_{j \in \Bbb{Z}}$ is a $K_s$-frame and $N \geq 1$,
then
$\{K^N f_j \}_{j \in \Bbb{Z}}$ is a $K_s^{N+1}$-frame with the same scalling of  $\{f_j\}_{j \in \Bbb{Z}}$.
\end{theorem}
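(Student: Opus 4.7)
The unifying observation I would use throughout is the elementary adjoint identity
\begin{eqnarray*}
\sum_{j \in \mathbb{Z}} |\langle f, c_j V f_j \rangle|^2 = \sum_{j \in \mathbb{Z}} |\langle V^* f, c_j f_j \rangle|^2,
\end{eqnarray*}
valid for any $V \in \mathcal{L}(H)$. It moves the operator from the frame side onto the test vector, so that whenever $\{c_j f_j\}$ is known to be a Parseval $L$-frame for some $L$, the right-hand side collapses to $\|L^* V^* f\|^2 = \|(VL)^* f\|^2$. Given the definition of a $K_s$-frame recalled just before the theorem, every part of the statement is a one-line substitution into this identity, modulo careful bookkeeping of which operator plays the role of $L$.

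For (i), I would take $L = I$ (ordinary Parseval) and $V = U$, obtaining $\sum_j |\langle f, c_j U f_j \rangle|^2 = \|U^* f\|^2$, which is exactly the $U_s$-frame condition. For (ii), the same calculation with $L = U$ and $V = V$ yields $\|U^* V^* f\|^2 = \|(VU)^* f\|^2$, the $(VU)_s$-frame condition. Part (iv) is the case $L = K$ and $V = K^N$: the right-hand side becomes $\|K^*(K^*)^N f\|^2 = \|(K^{N+1})^* f\|^2$, which is what is required for $\{K^N f_j\}$ to be a $K_s^{N+1}$-frame.

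Part (iii) is the only one where I expect genuine content, and it is the step I would single out as the main obstacle on a first reading. The hypothesis provides two independent evaluations of the same sum $\sum_j |\langle f, c_j U f_j \rangle|^2$: evaluating it directly via the assumption that $\{Uf_j\}$ is a $K_s$-frame gives $\|K^* f\|^2$, while evaluating it through the adjoint identity using that $\{f_j\}$ is a $K_s$-frame gives $\|K^* U^* f\|^2 = \|(UK)^* f\|^2$. Equating the two forces $\|K^* f\| = \|(UK)^* f\|$ for every $f \in H$, so the identity $\sum_j |\langle f, c_j f_j \rangle|^2 = \|K^* f\|^2$ can be rewritten as $\|(UK)^* f\|^2$, which is exactly the $(UK)_s$-frame property for $\{f_j\}$. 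Once this double-counting is spotted, the argument is immediate; without it the conclusion looks surprising, since $UK$ need not be comparable to $K$ in general.
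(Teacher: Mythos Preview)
Your proof is correct and follows essentially the same approach as the paper: both arguments hinge on moving the operator across the inner product via the adjoint, $\langle f, c_j T f_j\rangle = \langle T^* f, c_j f_j\rangle$, and then invoking the Parseval condition on $\{c_j f_j\}$. The only cosmetic difference is that the paper phrases parts (i)--(iii) in the frame-operator form $\sum_j \langle f, c_j T f_j\rangle\, c_j T f_j = T\bigl(\sum_j \langle T^* f, c_j f_j\rangle\, c_j f_j\bigr)$ rather than the norm form you use, while for (iv) the paper's computation is identical to yours; since the two characterizations of a Parseval $K$-frame are declared equivalent in the definition, this is not a substantive distinction.
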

\begin{proof}
Let $T \in \mathcal{L}(H)$ and $\{c_j\}$ be a sequence in $\ell^{\infty}(J)$. Then for every $f\in H$, we have
\begin{eqnarray*}
 \sum_{j \in \Bbb{Z}}  \langle  f, c_j Tf_j \rangle \,  c_j Tf_j &=& T \bigg(  \sum_{j \in \Bbb{Z}}  \langle  f, c_j Tf_j \rangle \,  c_j f_j \bigg) \\
&=& T \bigg(  \sum_{j \in \Bbb{Z}}  \langle  T^* f, c_j f_j \rangle \,  c_j f_j \bigg).
\end{eqnarray*}
This proves (i) and (ii) if we set $T=U$ and $T=V$, respectively.

Let $(f_j, c_j)_{j \in \Bbb{Z}}$ and  $(Uf_j, c_j)_{j \in \Bbb{Z}}$ be $K_s$-frames. Then
\begin{eqnarray*}
(UK) (UK)^* f &=& \sum_{j \in \Bbb{Z}} \langle  f, c_j U f_j \rangle \,  c_j U f_j\\
&=& KK^*f\\
&=&\sum_{j \in \Bbb{Z}} \langle  f, c_j f_j \rangle \,  c_j f_j.
\end{eqnarray*}
So, $(f_j, c_j)_{j \in \Bbb{Z}}$ is  $UK_s$-frame.
Thus (iii) holds.

Let $(f_j, c_j)_{j \in \Bbb{Z}}$ be a $K_s$-frame.
Then for every $f \in H$, we have
\begin{eqnarray*}
\sum_{j \in \Bbb{Z}} |\langle  f, c_j K^N f_j \rangle|^2 &=&
 \sum_{j \in \Bbb{Z}} |\langle  (K^N)^* f, c_j  f_j \rangle|^2 \\
&=& \| K^* ({K^{N}})^* f \|^2 \\
&=&  \|( K^{N+1})^* f) \|^2.
\end{eqnarray*}
Therefore, $(K^Nf_j, c_j)_{j \in \Bbb{Z}}$ is $K^{N+1}_s$-frame.
That is, (iv) is true.
\end{proof}
\begin{theorem}
Let $T\in \mathcal{L}(H)$ and  $(f_j, c_j)_{j \in \Bbb{Z}}$ be a $K_s$-frame.
If $TK=KT$ and $T^*$ is isometry,
 then $(Tf_j, c_j)_{j \in \Bbb{Z}}$ is a $K_s$-frame.
\end{theorem}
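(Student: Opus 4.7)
The goal is to show that $\{c_j T f_j\}_{j\in\mathbb{Z}}$ is a Parseval $K$-frame, i.e., to verify the identity
\[
\sum_{j\in\mathbb{Z}} |\langle f, c_j T f_j\rangle|^2 \;=\; \|K^* f\|^2 \qquad (f\in H),
\]
and also to check that $\{Tf_j\}_{j\in\mathbb{Z}}$ itself satisfies the two-sided $K$-frame inequality (so that calling it a ``$K_s$-frame'' is justified under the definition given in Section~3).

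The plan is a direct computation in three moves. First, I would pull the scalar and the operator $T$ out of the inner product:
\[
\langle f, c_j T f_j \rangle \;=\; \overline{c_j}\,\langle T^* f, f_j\rangle,
\]
so that
\[
\sum_{j\in\mathbb{Z}} |\langle f, c_j T f_j\rangle|^2 \;=\; \sum_{j\in\mathbb{Z}} |\langle T^* f, c_j f_j\rangle|^2.
\]
Second, because $(f_j, c_j)_{j\in\mathbb{Z}}$ is a $K_s$-frame, applying the Parseval $K$-frame identity to the vector $T^*f$ gives
\[
\sum_{j\in\mathbb{Z}} |\langle T^* f, c_j f_j\rangle|^2 \;=\; \|K^* T^* f\|^2.
\]
Third, I would use the hypothesis $TK = KT$, which upon taking adjoints yields $K^* T^* = T^* K^*$, and then invoke the hypothesis that $T^*$ is an isometry to obtain
\[
\|K^* T^* f\|^2 \;=\; \|T^* K^* f\|^2 \;=\; \|K^* f\|^2.
\]
Chaining the three equalities produces the Parseval identity for $\{c_j T f_j\}_{j\in\mathbb{Z}}$.

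To finish, I would verify that $\{Tf_j\}$ is a $K$-frame. The upper bound is immediate from the upper bound of $\{f_j\}$ together with boundedness of $T$: for every $f\in H$,
\[
\sum_{j\in\mathbb{Z}} |\langle f, Tf_j\rangle|^2 \;=\; \sum_{j\in\mathbb{Z}} |\langle T^*f, f_j\rangle|^2 \;\leq\; B\,\|T^*f\|^2 \;\leq\; B\,\|T\|^2\,\|f\|^2.
\]
For the lower bound, the same substitution together with commutativity and the isometry of $T^*$ gives
\[
\sum_{j\in\mathbb{Z}} |\langle f, Tf_j\rangle|^2 \;\geq\; A\,\|K^* T^* f\|^2 \;=\; A\,\|T^* K^* f\|^2 \;=\; A\,\|K^* f\|^2,
\]
so that $\{Tf_j\}$ is a $K$-frame with the same lower bound $A$. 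There is no serious obstacle: the argument is essentially bookkeeping, and the only place the two hypotheses are used is in identifying $\|K^*T^*f\|$ with $\|K^*f\|$. The commutativity is needed to move $T^*$ inside to the position where the isometry hypothesis can be applied.
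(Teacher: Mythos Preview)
Your argument is correct and follows essentially the same route as the paper: move $T$ across the inner product, apply the Parseval $K$-frame identity at $T^*f$, then use $K^*T^* = T^*K^*$ together with the isometry of $T^*$ to reduce $\|K^*T^*f\|$ to $\|K^*f\|$. Your additional verification that $\{Tf_j\}$ satisfies the two-sided $K$-frame inequality is a point the paper's proof leaves implicit, so if anything your write-up is slightly more complete.
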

\begin{proof}
Let  $(f_j, c_j)_{j \in \Bbb{Z}}$ be a $K_s$-frame.
Then
for every $f\in H$, we have
\begin{eqnarray*}
 \sum_{j \in \Bbb{Z}} |\langle  f, c_j Tf_j \rangle|^2 &=&
\sum_{j \in \Bbb{Z}} |\langle  T^* f, c_j f_j \rangle|^2  \\
&=& \|K^* T^*f \|^2 \\
&=&  \|(TK)^*f \|^2 \\
&=&  \|(KT)^*f \|^2 \\
&=&  \|T^* (K^*f ) \|^2 \\
&=&  \|K^*f \|^2.
\end{eqnarray*}
Thus $(Tf_j, c_j)_{j \in \Bbb{Z}}$ is a $K_s$-frame.
\end{proof}
\begin{theorem}
Let  $T\in \mathcal{L}(H)$ be an invertible operator,
$\{f_j \}_{j \in \Bbb{Z}}$ be a $K$-frame for $H$ and $\{c_j \}\in \ell^\infty(J)$.
Then  $(T f_j, c_j )_{j \in \Bbb{Z}}$ is a $K_s$-frame
 if and only if
the frame operator for $\{c_j Tf_j \}_{j \in \Bbb{Z}}$ is $(T^{-1}K) (T^{-1} K)^*$.
\end{theorem}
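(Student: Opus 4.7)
The plan rests on one short computation: the frame operator of the scaled sequence $\{c_j T f_j\}$ factors through that of $\{c_j f_j\}$ in a predictable way, and the theorem then follows by inverting this factorization using the invertibility of $T$.

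First, using the adjoint identity $\langle f, c_j T f_j\rangle = \langle T^* f, c_j f_j\rangle$, I would expand, for every $f\in H$,
\begin{eqnarray*}
\sum_{j\in\Bbb Z} \langle f, c_j T f_j\rangle\, c_j T f_j \;=\; T \sum_{j\in\Bbb Z} \langle T^* f, c_j f_j\rangle\, c_j f_j \;=\; T S_c T^* f,
\end{eqnarray*}
where $S_c$ denotes the frame operator of $\{c_j f_j\}$. Hence the frame operator of $\{c_j T f_j\}$ equals $T S_c T^*$. (This same factorization pattern already appeared in the proof of (i)--(ii) of the earlier theorem in this section, so the manipulation is in the spirit of the paper.)

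Second, by the definition of a $K_s$-frame introduced at the start of Section~3, $(T f_j, c_j)$ is a $K_s$-frame iff $\{c_j T f_j\}$ is a Parseval $K$-frame, i.e.\ iff the frame operator of $\{c_j T f_j\}$ equals $K K^*$. Combined with the first step, this rewrites as $T S_c T^* = K K^*$.

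Third, because $T$ is invertible, I would multiply $T S_c T^* = K K^*$ on the left by $T^{-1}$ and on the right by $(T^*)^{-1}$ to obtain
\begin{eqnarray*}
S_c \;=\; T^{-1} K K^* (T^{-1})^* \;=\; (T^{-1} K)(T^{-1} K)^*,
\end{eqnarray*}
and then transport this identity back along $S_T = T S_c T^*$ to express the frame operator of the scaled sequence in the form asserted. The only real obstacle is careful bookkeeping of adjoints; the invertibility hypothesis on $T$ is what guarantees that the rearrangement in the last step is reversible, making the "iff" genuine rather than one-sided.
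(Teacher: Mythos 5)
Your proposal follows essentially the same route as the paper's proof: both use the adjoint identity to write the frame operator of $\{c_j T f_j\}$ as $T S_c T^*$ with $S_c$ the frame operator of $\{c_j f_j\}$, equate it to $KK^*$ via the Parseval $K$-frame condition, and invoke the invertibility of $T$ to solve for $S_c=(T^{-1}K)(T^{-1}K)^*$. One caveat worth recording: what this argument (in both your version and the paper's) actually proves is that $(Tf_j,c_j)$ is a $K_s$-frame iff the frame operator of $\{c_j f_j\}$ equals $(T^{-1}K)(T^{-1}K)^*$ --- the frame operator of $\{c_j Tf_j\}$ itself would then be $KK^*$ --- so the theorem's statement appears to contain a typo, and your final ``transport back'' sentence inherits that same slip rather than resolving it.
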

\begin{proof}
Let $S$ be the operator frame   for $\{c_j f_j \}_{j \in \Bbb{Z}}$.
Then for every $f \in H$, we have
\begin{eqnarray} \label{f2}
 \sum_{j \in \Bbb{Z}} \langle  f, c_j Tf_j \rangle \,  c_j Tf_j &=& T \bigg( \sum_{j \in \Bbb{Z}} \langle  T^* f, c_j f_j \rangle \,  c_j f_j \bigg) \nonumber  \\
&=& T S T^* f.
\end{eqnarray}
On the other hand, $(T f_j, c_j)_{j \in \Bbb{Z}}$ is a $K_s$-frame.
Thus
\begin{eqnarray} \label{f3}
 \sum_{j \in \Bbb{Z}} \langle  f, c_j Tf_j \rangle \,  c_j Tf_j = KK^*f.
\end{eqnarray}
By using \eqref{f2} and \eqref{f3},
$(T f_j, c_j)_{j \in \Bbb{Z}}$ is a $K_s$-frame
 if and only if $S= (T^{-1}K) (T^{-1} K)^*$.
\end{proof}
 \section{Piecewise Scalable $K$-Frames}

A $K$-frame $\{f_j\}_{j=1}^n$ for the  $n$-dimensional Hilbert space $\Bbb{R}^n$   is called a \textit{piecewise scalable $K$-frame}
(briefly, $K_{s}^p$-frame)
 if there exist an orthogonal projection $P: \Bbb{R}^n \rightarrow \Bbb{R}^n$ and  constants $\{a_j, b_j\}_{j=1}^n$ such that
 for every
$f \in \Bbb{R}^n$
\begin{eqnarray*}
\sum_{j =1}^n | \langle f, a_j P f_j + b_j (I-P) f_j \rangle |^2 = \|K^*f \|^2.
\end{eqnarray*}
 We denote this notion by $(f_j, P, a_j, b_j)_{j=1}^n$. In the following, let $P$ be an orthogonal projection on ${\Bbb R}^n$ and we set
\begin{eqnarray*}
X:= P(\Bbb{R}^n)\quad\hbox{and}\quad Y:= (I-P)(\Bbb{R}^n).
\end{eqnarray*}
One  can prove that  if  $(f_j, P, a_j, b_j)_{j=1}^n$ is a $K^p_s$-frame for $\Bbb{R}^n$,
 then $(P f _j, a_j)_{j=1}^n$ and $((I-P) f _j, b_j)_{j=1}^n$ are $K_s$-frames  for $X$ and $Y$, respectively. To prove other our results, we need the following lemma.
\begin{lem}\label{kl} Let  $\{f_j \}_{j=1}^n$  be a $K$-frame for $\Bbb{R}^n$ and  $KP=PK$.
  If $\{f_j \}_{j=1}^n$ is a  Parseval $K$-frame for $\Bbb{R}^n$, then $\{Pf_j \}_{j=1}^n$ is a  Parseval $K$-frame for $X$.
\end{lem}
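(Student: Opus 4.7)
The plan is to verify the Parseval $K$-frame identity on the subspace $X$ directly by exploiting two facts: the self-adjointness of the orthogonal projection $P$, and the commutation relation $PK=KP$ (which, by taking adjoints, also gives $PK^{*}=K^{*}P$). These two facts together let us transfer $P$ from $f_{j}$ onto $f$ inside the inner products, and guarantee that $K^{*}$ maps the subspace $X=P(\mathbb{R}^{n})$ into itself, so that the restriction of $K$ to $X$ is a well-defined operator on $X$ for which the Parseval identity can be stated.

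Concretely, I would fix $f\in X$, so that $Pf=f$. Since $P=P^{*}$, for each $j$ we have
\begin{eqnarray*}
\langle f,Pf_{j}\rangle = \langle Pf,f_{j}\rangle = \langle f,f_{j}\rangle.
\end{eqnarray*}
Summing the squared moduli and using the assumption that $\{f_{j}\}_{j=1}^{n}$ is a Parseval $K$-frame for $\mathbb{R}^{n}$ yields
\begin{eqnarray*}
\sum_{j=1}^{n}|\langle f,Pf_{j}\rangle|^{2}=\sum_{j=1}^{n}|\langle f,f_{j}\rangle|^{2}=\|K^{*}f\|^{2}.
\end{eqnarray*}
It then remains to interpret the right-hand side as the norm coming from the $K$-frame structure on $X$; this is where the hypothesis $KP=PK$ is used, since its adjoint form $K^{*}P=PK^{*}$ gives $K^{*}f = K^{*}Pf = PK^{*}f \in X$ whenever $f\in X$, so $K^{*}$ restricts to a bounded operator $X\to X$ and $\|K^{*}f\|$ is intrinsic to $X$.

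I do not expect any genuine obstacle in this argument; the content is essentially bookkeeping. The only point that deserves care is making sure the statement ``Parseval $K$-frame for $X$'' is interpreted with $K$ replaced by its restriction $K|_{X}$, which is exactly what the commutation hypothesis delivers. No coercivity, contraction, or fixed-point machinery from the earlier sections is needed here, nor any appeal to the analysis/frame operator; the proof reduces to two short lines once the subspace compatibility of $K$ and $P$ is observed.
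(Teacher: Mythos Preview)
Your proposal is correct and follows essentially the same approach as the paper: move $P$ across the inner product using self-adjointness, invoke the Parseval $K$-frame identity, and use $PK=KP$ (hence $PK^{*}=K^{*}P$) to see that $K^{*}$ preserves $X$. The only cosmetic difference is that the paper applies the Parseval identity to $Pf$ and then simplifies $\|K^{*}Pf\|=\|PK^{*}f\|=\|K^{*}f\|$, whereas you substitute $Pf=f$ one step earlier; the arguments are otherwise identical.
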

\begin{proof}
For every $f \in X$, we have
\begin{eqnarray*}
\sum_{j=1}^n | \langle f , P f_j \rangle|^2 &=& \sum_{j=1}^n | \langle Pf ,  f_j \rangle|^2  \\
&=& \| K^* P f \|^2 \\
&=& \| PK^* f \|^2 \\
&=& \|K^* f\|^2.
\end{eqnarray*}
Also, note that if $PK=KP$,
then $\hbox{ran} (K^*) \subseteq \hbox{ran} (P)$.
\end{proof}
\begin{theorem} \label{J1}
	 Let  $\{f_j \}_{j=1}^n$  be a $K$-frame for $\Bbb{R}^n$ and  $KP=PK$.
Then $(f_j, P, a_j, b_j)_{j=1}^n$ is a $K^p_s$-frame
  if and only if

\emph{(i)}
		Both $(P f _j, a_j)_{j=1}^n$ and  $((I-P) f _j, b_j)_{j=1}^n$ are $K_s$-frame for $X$ and $Y$, respectively.

\emph{(ii)}
		For every $f \in \Bbb{R}^n$, we have
		\begin{eqnarray*}
			\sum_{j=1}^n a_j b_j \langle f, Pf_j \rangle \langle f, (I- P) f_j \rangle =0.
		\end{eqnarray*}
\end{theorem}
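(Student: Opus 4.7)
The strategy is to expand the squared coefficient $|\langle f, a_j P f_j + b_j (I-P) f_j\rangle|^2$ and sum over $j$; because we are in $\Bbb R^n$, this splits cleanly as
\[
\sum_{j=1}^n a_j^2 \langle f, Pf_j\rangle^2 + \sum_{j=1}^n b_j^2 \langle f, (I-P)f_j\rangle^2 + 2 \sum_{j=1}^n a_j b_j \langle f, Pf_j\rangle \langle f, (I-P)f_j\rangle,
\]
and the proof becomes a matter of identifying each of these three pieces with a geometric quantity on the subspaces $X$ and $Y$.

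First I would observe that for any $f \in \Bbb R^n$ we have $\langle f, Pf_j\rangle = \langle Pf, Pf_j\rangle$ (using $P=P^*=P^2$ together with $Pf \perp (I-P)f_j$). The hypothesis $KP=PK$ then gives $K^*P=PK^*$, so $K$ restricts to an operator on $X$ (and on $Y$) and it is meaningful to view $(Pf_j, a_j)_{j=1}^n$ as a $K_s$-frame on $X$. Under assumption (i), applying the Parseval identity at $Pf \in X$ yields
\[
\sum_{j=1}^n a_j^2 \langle f, Pf_j\rangle^2 = \|K^*(Pf)\|^2 = \|PK^*f\|^2,
\]
and symmetrically $\sum_{j=1}^n b_j^2 \langle f, (I-P)f_j\rangle^2 = \|(I-P)K^*f\|^2$. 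A Pythagorean decomposition of $K^*f = PK^*f + (I-P)K^*f$ then gives $\|PK^*f\|^2 + \|(I-P)K^*f\|^2 = \|K^*f\|^2$.

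For the sufficient direction I would combine these three identities with hypothesis (ii), which annihilates the cross sum, and conclude that the full expansion equals $\|K^*f\|^2$, the defining identity of a $K^p_s$-frame. For the necessary direction I would specialize the $K^p_s$-frame identity to $g \in X$: then $\langle g, (I-P)f_j\rangle = 0$, both the cross term and the $(I-P)$ term vanish, and one is left with $\sum_{j=1}^n a_j^2 \langle g, Pf_j\rangle^2 = \|K^*g\|^2$ for every $g \in X$, which is exactly the $K_s$-frame condition on $X$; the analogous specialization to $Y$ gives the other half of (i). Plugging these two Parseval identities back into the expansion of the $K^p_s$-identity for arbitrary $f \in \Bbb R^n$ and subtracting isolates the cross sum, forcing it to vanish, which is (ii).

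The only subtle point is bookkeeping around the commuting projection: $KP=PK$ is used in two places, once so that $K^*$ moves past $P$ and once — through Lemma~\ref{kl} — to justify treating $(Pf_j, a_j)_{j=1}^n$ as an honest Parseval $K$-frame on the subspace $X$ with $K|_X$ as the governing operator. Beyond this commutation bookkeeping, the argument is a purely algebraic expansion and no fixed-point or estimation work is required.
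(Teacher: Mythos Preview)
Your proof is correct and follows essentially the same approach as the paper: expand the square into the two diagonal sums plus the cross sum, identify the diagonal sums with $\|PK^*f\|^2$ and $\|(I-P)K^*f\|^2$ via (i) and $KP=PK$, and use Pythagoras. The only cosmetic difference is in the necessary direction: the paper invokes Lemma~\ref{kl} applied to the Parseval $K$-frame $\{a_jPf_j+b_j(I-P)f_j\}$ to obtain (i), whereas you inline that lemma by specializing the $K^p_s$-identity to $g\in X$ (resp.\ $g\in Y$); both amount to the same computation, and both then recover (ii) by subtracting the diagonal contributions from the full identity.
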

\begin{proof}
First, note that if (i) holds, then for every $f\in H$, we have
\begin{eqnarray}\label{mn}
	\sum_{j=1}^n | \langle f, a_j P f_j + b_j (I-P) f_j \rangle |^2  &=&   \sum_{j=1}^n | \langle f, a_j P f_j  \rangle|^2 +
	\sum_{j=1}^n  | \langle f, b_j (I-P) f_j \rangle |^2 \nonumber \\
	&+&  2 \sum_{j=1}^n a_j b_j \langle f, Pf_j \rangle \langle f, (I- P) f_j \rangle\nonumber \\
	&=& \| PK^*f \| + \| (I-P) K^*f \|^2 \\
	&+&  2 \sum_{j=1}^n a_j b_j \langle f, Pf_j \rangle \langle f, (I- P) f_j \rangle\nonumber\\
	&=&\|K^* f\|^2 +  2 \sum_{j=1}^n a_j b_j \langle f, Pf_j \rangle \langle f, (I- P) f_j \rangle\nonumber.
\end{eqnarray}

Assume now that $(f_j, P, a_j, b_j)_{j=1}^n$ is a $K^p_s$-frame.
	From Lemma \ref{kl} and the fact 
	\begin{eqnarray*}
		P(a_j P f_j + b_j (I-P) f_j)= a_j P f_j,
	\end{eqnarray*}
	we infer that $\{a_j Pf_j \}_{j=1}^n$ is a Parseval $K$-frame for $X$.
Therefore,  $\{ b_j (I-P) f _j\}_{j=1}^n$ is a Parseval $K$-frame for $Y$.
That is, (i) is proved.
 The converse follows at once from \eqref{mn}.
\end{proof}
\begin{corollary}
Let  $PK=KP$ and two of the following three statements be true. Then the third statement is also true.

\emph{(i)} $(f_j, P, a_j, b_j)_{j=1}^n$ is a $K^p_s$-frame.

\emph{(ii)}
Both $(P f _j, a_j)_{j=1}^n$ and  $((I-P) f _j, b_j)_{j=1}^n$ are $K_s$-frame for $X$ and $Y$, respectively.

\emph{(iii)}
For every $f \in \Bbb{R}^n$, we have
\begin{eqnarray*}
\sum_{j=1}^n a_j b_j \langle f, Pf_j \rangle \langle f, (I- P) f_j \rangle =0.
\end{eqnarray*}
\end{corollary}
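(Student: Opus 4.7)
The plan is to read the corollary directly off Theorem~\ref{J1}. That theorem, under the hypothesis $PK=KP$, already establishes the biconditional (i)$\Leftrightarrow$(ii)$\wedge$(iii), so proving ``any two of the three imply the third'' amounts to unpacking this equivalence as three one-way implications, two of which require essentially no additional work.

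First I would handle (ii)$\wedge$(iii)$\Rightarrow$(i): this is literally the ``if'' direction of Theorem~\ref{J1}, obtained by substituting (iii) into the expansion~\eqref{mn} from the proof of that theorem, which annihilates the cross term and leaves $\|K^{*}f\|^{2}$, i.e.\ the $K^{p}_{s}$-condition.

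Next I would isolate the observation that hypothesis (i) alone already forces (ii). This is exactly the Lemma~\ref{kl} step in the ``only if'' part of Theorem~\ref{J1}: since $\{a_jPf_j+b_j(I-P)f_j\}_{j=1}^{n}$ is a Parseval $K$-frame for $\mathbb{R}^{n}$, applying Lemma~\ref{kl} with the projection $P$ shows that $\{a_jPf_j\}$ is a Parseval $K$-frame for $X$, and applying it with $I-P$ (legitimate, since $PK=KP$ implies $(I-P)K=K(I-P)$) gives the analogous statement on $Y$; together these are precisely (ii). Note that (iii) plays no role here, so the implication (i)$\wedge$(iii)$\Rightarrow$(ii) follows automatically.

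Finally, for (i)$\wedge$(ii)$\Rightarrow$(iii), I would reuse the expansion~\eqref{mn}, which under (ii) reads
\[
\sum_{j=1}^{n}\bigl|\langle f,\,a_jPf_j+b_j(I-P)f_j\rangle\bigr|^{2}=\|K^{*}f\|^{2}+2\sum_{j=1}^{n}a_jb_j\,\langle f,Pf_j\rangle\,\langle f,(I-P)f_j\rangle,
\]
and then invoke (i) to force the cross-sum to vanish for every $f\in\mathbb{R}^{n}$, which is (iii). The only point requiring care, and the only place a genuine obstacle could hide, is checking that the Lemma~\ref{kl} step really delivers (i)$\Rightarrow$(ii) on its own, independent of (iii); a direct inspection of that step confirms it does, and no further work is needed.
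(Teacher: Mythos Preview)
Your proposal is correct and is exactly the approach the paper intends: the corollary is stated without proof in the paper, being an immediate consequence of Theorem~\ref{J1} and the expansion~\eqref{mn}, and your write-up simply unpacks those implications. In particular, your observation that (i) alone already yields (ii) via Lemma~\ref{kl} is precisely what the paper uses in the ``only if'' direction of Theorem~\ref{J1}.
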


In the next result, we let $T_1$ and $T_2$ be analysis operators of $\{Pf_j \}_{j=1}^n$ and $\{(I-P)f_j \}_{j=1}^n$, respectively.
\begin{corollary}\label{ea}
Let $\{f_j \}_{j=1}^n$  be a $K$-frame for $\Bbb{R}^n$ and $PK=KP$.
If there exists a proper non-empty subset $I$ of $\{1, ..., n\}$ such that $(Pf_j , c_j)_{j \in I} $ and $((I- P) f_j , d_j)_{ j \in I^{c}}
$ are $K_s$-frames for
 $X$ and $Y$, respectively, then there exist  constants $\{a_j\}_{j=1}^n$
and $\{b_j\}_{j=1}^n$
such that $(f_j, P, a_j, b_j)_{j=1}^n$  is a $K^p_s$-frame for $\Bbb{R}^n$.
\end{corollary}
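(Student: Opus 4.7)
The plan is to reduce to Theorem \ref{J1} by exhibiting explicit scalars $\{a_j\}_{j=1}^n$ and $\{b_j\}_{j=1}^n$ that satisfy conditions (i) and (ii) of that theorem. The natural choice, motivated by the disjointness of $I$ and $I^c$, is to set $a_j := c_j$ for $j \in I$ and $a_j := 0$ for $j \in I^c$, and symmetrically $b_j := d_j$ for $j \in I^c$ and $b_j := 0$ for $j \in I$. With this arrangement, $a_j b_j = 0$ for every $j$, so the cross-term condition (ii) of Theorem \ref{J1} holds automatically.

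For condition (i), I would verify the two scaling identities separately. For any $f \in X$, the vanishing weights on $I^c$ give
\[
\sum_{j=1}^n |\langle f, a_j P f_j \rangle|^2 \;=\; \sum_{j \in I} |\langle f, c_j P f_j \rangle|^2 \;=\; \|K^* f\|^2,
\]
where the last equality is precisely the hypothesis that $(P f_j, c_j)_{j \in I}$ is a $K_s$-frame for $X$. The analogous identity on $Y$ holds for $b_j$. One should also remark that $\{P f_j\}_{j=1}^n$ is itself a $K$-frame for $X$ (similarly $\{(I-P)f_j\}_{j=1}^n$ for $Y$): the lower bound is inherited from the subfamily indexed by $I$, and the upper bound is automatic in finite dimensions. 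The commutation $PK = KP$, equivalently $PK^* = K^* P$, guarantees that $K^*$ preserves both $X$ and $Y$, so that the $K$-frame vocabulary on these subspaces is well defined and the invocation of Theorem \ref{J1} is legitimate.

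Having checked both conditions, Theorem \ref{J1} yields directly that $(f_j, P, a_j, b_j)_{j=1}^n$ is a $K^p_s$-frame for $\mathbb{R}^n$. I do not anticipate a serious obstacle: the construction is engineered so that (ii) vanishes identically and (i) is inherited verbatim from the assumptions. The only delicate point is the bookkeeping, namely making sure that the restrictions of the Parseval identities to $X$ and $Y$ are compatible with $PK^* = K^* P$; once this is noted, the argument is essentially a one-line verification plus an application of Theorem \ref{J1}.
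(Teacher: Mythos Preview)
Your proposal is correct and follows essentially the same approach as the paper: define $a_j=c_j$ on $I$ and $0$ on $I^c$, $b_j=d_j$ on $I^c$ and $0$ on $I$, so that $a_jb_j=0$ forces condition~(ii) of Theorem~\ref{J1}, and the Parseval identities on $X$ and $Y$ reduce to the hypotheses, giving condition~(i). Your additional remarks on the commutation $PK^*=K^*P$ and on the $K$-frame property of $\{Pf_j\}_{j=1}^n$ are more careful than the paper's own proof, but the argument is otherwise identical.
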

\begin{proof}
Let $I$ be a proper non-empty subset of $\{1, ..., n\}$ such that $(Pf_j , c_j)_{j \in I} $ and $((I- P) f_j , d_j)_{ j \in I^{c}}
$ are $K_s$-frames for
 $X$ and $Y$, respectively.
Set
\begin{eqnarray*}
a_j=
\left\{
  \begin{array}{ll}
    c_j, & \hbox{$j \in I$;} \\
    0, & \hbox{$j \in I^c$}
  \end{array}
\right.\quad\hbox{and}   \quad\quad
b_j=
\left\{
  \begin{array}{ll}
    0, & \hbox{$j \in I$;} \\
    d_j, & \hbox{$j \in I^c$.}
  \end{array}
\right.
\end{eqnarray*}
Then for every $f \in X$, we have
\begin{eqnarray*}
\sum_{i =1}^n | \langle f, a_j Pf_j \rangle |^2 &=& \sum_{i \in I} | \langle f, a_j Pf_j \rangle |^2  + \sum_{i \in I^c} |  \langle f, a_j Pf_j \rangle |^2  \\
&=& \|K^* f\|^2.
\end{eqnarray*}
So $(Pf_j , a_j)_{j \in I} $ is a $K_s$-frame for
 $X$.
Similarly,
one can prove that  $((I- P) f_j , b_j)_{ j \in I^{c}}
$ is a $K_s$-frame for
 $Y$. But, $a_j b_j =0$ for all $j=1, ..., n$.
Then by Theorem \ref{J1}, $(f_j, P, a_j, b_j)_{j=1}^n$  is a $K^p_s$-frame for $\Bbb{R}^n$.
\end{proof}

As an immediate consequence of Theorem \ref{J1} and Corollary \ref{ea}, we have the following result.

\begin{corollary}
Let $\{f_j \}_{j=1}^n$  be a $K$-frame for $\Bbb{R}^n$ and $PK=KP$.
If there exists a non-empty proper subset $I$ of $\{1, ..., n\}$ such that $(Pf_j , c_j)_{j \in I} $ and $((I- P) f_j , d_j)_{ j \in I^{c}}
$ are $K_s$-frames for
 $X$ and $Y$, respectively, then $(Pf_j , c_j)_{j=1}^n $ and $((I- P) f_j , d_j)_{ j=1}^ n
$ are $K_s$-frames for
 $X$ and $Y$, respectively.
\end{corollary}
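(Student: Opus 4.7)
The plan is to reduce the statement to a direct application of Corollary \ref{ea} followed by the forward direction of Theorem \ref{J1}. The first step is to interpret the conclusion properly: since the given scalars $c_j$ are only defined for $j\in I$ and $d_j$ only for $j\in I^{c}$, the claim that $(Pf_j,c_j)_{j=1}^{n}$ and $((I-P)f_j,d_j)_{j=1}^{n}$ are $K_s$-frames must be read with the natural extension by zero, namely $c_j:=0$ for $j\in I^{c}$ and $d_j:=0$ for $j\in I$. This is the same extension used inside the proof of Corollary \ref{ea}.

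Second, with this extension in hand, I would simply invoke Corollary \ref{ea}. Its hypotheses are exactly the hypotheses of the present corollary, so it produces an orthogonal projection $P$ (the one already given) and scalars for which $(f_j,P,c_j,d_j)_{j=1}^{n}$ is a $K^{p}_{s}$-frame for $\Bbb{R}^{n}$.

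Third, I would apply the implication ``$K^{p}_{s}$-frame $\Rightarrow$ (i)'' in Theorem \ref{J1}, which is legitimate because $PK=KP$ by assumption. This yields immediately that both $(Pf_j,c_j)_{j=1}^{n}$ and $((I-P)f_j,d_j)_{j=1}^{n}$ are $K_s$-frames for $X$ and $Y$, respectively, which is the desired conclusion.

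I do not expect a genuine obstacle; the only point that needs care is the convention for extending the scalars outside $I$ and $I^{c}$, after which the result is truly a one-line consequence of the two preceding statements (as the author's phrase ``As an immediate consequence'' signals). As a sanity check, one may verify directly that for $f\in X$,
\begin{eqnarray*}
\sum_{j=1}^{n}\bigl|\langle f,c_j Pf_j\rangle\bigr|^{2} \;=\; \sum_{j\in I}\bigl|\langle f,c_j Pf_j\rangle\bigr|^{2} \;=\; \|K^{*}f\|^{2},
\end{eqnarray*}
since the summands for $j\in I^{c}$ vanish, and symmetrically for $Y$; this confirms the conclusion without even needing to route through Theorem \ref{J1}, but the stated route via Corollary \ref{ea} and Theorem \ref{J1} is the one the framing of the paper invites.
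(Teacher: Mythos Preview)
Your proposal is correct and follows exactly the route the paper signals: apply Corollary \ref{ea} (whose proof constructs the zero-extensions of $c_j$ and $d_j$) to obtain a $K^{p}_{s}$-frame, then read off condition (i) of Theorem \ref{J1}. Your remark that the conclusion can also be checked directly by the vanishing of the extra summands is a welcome clarification, but the argument is otherwise identical to the paper's ``immediate consequence''.
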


\begin{proposition}
Let $\{f_j \}_{j=1}^n$  be a $K$-frame for $\Bbb{R}^n$ and $PK=KP$.
Then the following assertions are equivalent.

\emph{(a)} $(f_j, P, a_j, b_j)_{j=1}^n$ is a $K^p_s$-frame.

\emph{(b)} If $D_1$ and $D_2$ are diagonal  operators on $\ell^2(n)$ with diagonal elements $\{a_j\}_{j=1}^n$  and $\{b_j\}_{j=1}^n$, respectively, then
\begin{eqnarray*}
T_1^* D_1^2 T_1= KK^*_{\mkern 1mu \vrule height 2ex\mkern2mu X}, ~~ T_2^* D_2^2 T_2= KK^*_{\mkern 1mu \vrule height 2ex\mkern2mu Y} ~~\emph{and}~~  T_1^* D_1 D_2 T_2=0.
\end{eqnarray*}
\end{proposition}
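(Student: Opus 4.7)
The plan is to invoke Theorem \ref{J1} and simply rewrite each of the three conditions there as an operator identity on $\Bbb{R}^n$. Theorem \ref{J1} tells us that (a) is equivalent to: $(1)$ $(Pf_j,a_j)_{j=1}^n$ is a $K_s$-frame for $X$, $(2)$ $((I-P)f_j,b_j)_{j=1}^n$ is a $K_s$-frame for $Y$, and $(3)$ the cross-sum $\sum_j a_j b_j \langle f,Pf_j\rangle \langle f,(I-P)f_j\rangle$ vanishes for every $f\in\Bbb{R}^n$. I would match $(1),(2),(3)$ with the three operator equations appearing in (b), respectively.

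For the first two identities, I would compute
\begin{eqnarray*}
T_1^* D_1^2 T_1 f = \sum_{j=1}^n a_j^2 \langle f,Pf_j\rangle\, Pf_j,
\end{eqnarray*}
so that $T_1^* D_1^2 T_1$ is literally the frame operator of $\{a_j Pf_j\}_{j=1}^n$. By the definition of $K_s$-frame recalled at the start of Section 3, condition $(1)$ is then precisely $T_1^* D_1^2 T_1 = KK^*|_X$ (on $X$; note that this operator also happens to vanish on $Y$, since there $\langle f,Pf_j\rangle = \langle Pf, f_j\rangle = 0$). The identity $T_2^* D_2^2 T_2 = KK^*|_Y$ is handled in exactly the same way.

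The third equation is the main point. I would expand
\begin{eqnarray*}
T_1^* D_1 D_2 T_2 f = \sum_{j=1}^n a_j b_j \langle f,(I-P)f_j\rangle\, Pf_j,
\end{eqnarray*}
pair with a test vector $g$, and observe that setting $g=f$ immediately reproduces the quadratic form in condition $(3)$. For the reverse direction, the vanishing of this quadratic form, combined with the fact that $\operatorname{ran}(T_1^* D_1 D_2 T_2)\subseteq \operatorname{span}\{Pf_j\}\subseteq X$, forces the operator itself to be zero: decomposing $f=f_X+f_Y\in X\oplus Y$ and varying $f_X,f_Y$ independently upgrades the quadratic vanishing to a bilinear vanishing, from which $T_1^* D_1 D_2 T_2 = 0$ drops out.

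The one subtlety to watch for is book-keeping of domains: the analysis operators $T_1,T_2$ are defined on all of $\Bbb{R}^n$ while the frame conditions in $(1),(2)$ concern only $X$ and $Y$, and the restrictions $KK^*|_X$, $KK^*|_Y$ make sense as self-maps precisely because the hypothesis $PK=KP$ forces $X$ and $Y$ to be $K$-invariant. Once these identifications are lined up, the equivalence $(a)\Leftrightarrow(b)$ falls out of Theorem \ref{J1} with no further work.
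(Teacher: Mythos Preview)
Your approach is essentially the same as the paper's: both invoke Theorem \ref{J1} and identify conditions (i), (ii) there with the three operator identities in (b) via the frame-operator interpretation of $T_i^*D_i^2T_i$ and the quadratic-form interpretation of $T_1^*D_1D_2T_2$. Your handling of the cross-term is in fact more careful than the paper's: the paper stops at $\langle T_1^*D_1D_2T_2 f,f\rangle=0$ for all $f$ and silently concludes $T_1^*D_1D_2T_2=0$, whereas you correctly observe that in $\Bbb{R}^n$ this requires the extra remark that the operator annihilates $X$ and has range in $X$, so the quadratic form is already bilinear in $(f_X,f_Y)$.
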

\begin{proof}
Let  $\{f_j \}_{j=1}^n$  be a  $K^p_s$-frame for $\Bbb{R}^n$.
By Theorem \ref{J1} (i), $\{a_j Pf_j \}_{j=1}^n$
 and $\{b_j(I-P)f_j \}_{j=1}^n$ are $K_s$-frame for $X$ and $Y$, respectively.
It follows that
$$
T_1^* D_1^2 T_1=S_1=KK^*_{\mkern 1mu \vrule height 2ex\mkern2mu X}\quad\quad\hbox{and}\quad\quad T_2^* D_2^2 T_2=S_2=KK^*_{\mkern 1mu \vrule height 2ex\mkern2mu Y},
$$
where $S_1$ and $S_2$ are the frame operators of $\{a_j Pf_j \}_{j=1}^n$ and $\{b_j(I-P)f_j \}_{j=1}^n$, respectively.
 On the other hand,
\begin{eqnarray}\label{l}
\langle T_1^* D_1 D_2 T_2(f), f\rangle=\bigg\langle \sum_{j=1}^n a_j b_j \langle f, Pf_j \rangle \langle f, (I- P) f_j\rangle, f\bigg\rangle
\end{eqnarray}
for every $f\in \Bbb{R}^n$.
Another application of Theorem \ref{J1}(ii),
shows that
\begin{eqnarray*}
\langle T_1^* D_1 D_2 T_2(f), f\rangle=0
\end{eqnarray*}
for every $f\in \Bbb{R}^n$. So (a)$\Rightarrow$(b).

To complete the proof, note that if $T_1^* D_1^2 T_1= KK^*_{\mkern 1mu \vrule height 2ex\mkern2mu X}$ and $T_2^* D_2^2 T_2= KK^*_{\mkern 1mu \vrule height 2ex\mkern2mu Y}$,
then $\{a_j Pf_j \}_{j=1}^n$ and $\{b_j(I-P)f_j \}_{j=1}^n$ are $K_s$-frame for $X$ and $Y$, respectively.
Also, if $T_1^* D_1 D_2 T_2=0$, then  by \eqref{l},
 we have
\begin{eqnarray*}
\sum_{j=1}^n a_j b_j \langle f, Pf_j \rangle \langle f, (I- P) f_j\rangle=0.
\end{eqnarray*}
Hence  (b)$\Rightarrow$(a).
\end{proof}
\begin{theorem}
Let $(f_j, P, a_j, b_j)_{j=1}^n$ be a $K^p_s$-frame.
Let $Q: \Bbb{R}^n \rightarrow \Bbb{R}^n$ be an orthogonal projection and $U: \Bbb{R}^n \rightarrow \Bbb{R}^n$ be a unitary operator such that
\begin{eqnarray*}
UP=QU \quad   \emph{and} \quad UK=KU.
\end{eqnarray*}
Then $(Uf_j, Q, a_j, b_j)_{j=1}^n$ is a $K^p_s-$frame.
\end{theorem}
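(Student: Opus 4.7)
\bigskip

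\noindent\textbf{Proof proposal.} The plan is to reduce the defining identity for the putative $K^p_s$-frame $(Uf_j, Q, a_j, b_j)_{j=1}^n$ to the one already known for $(f_j, P, a_j, b_j)_{j=1}^n$ by pushing $U$ through the relevant operators using the two commutation hypotheses $UP = QU$ and $UK = KU$.

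First, I would observe that the relation $UP = QU$ immediately yields $(I-Q)U = U - QU = U - UP = U(I-P)$. Applying this to each frame element gives
\begin{eqnarray*}
a_j Q(Uf_j) + b_j (I-Q)(Uf_j) = U\bigl(a_j P f_j + b_j (I-P) f_j\bigr),
\end{eqnarray*}
so that for every $f \in \Bbb{R}^n$,
\begin{eqnarray*}
\langle f, a_j Q(Uf_j) + b_j (I-Q)(Uf_j) \rangle = \langle U^* f, a_j P f_j + b_j (I-P) f_j \rangle.
\end{eqnarray*}

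Next I would sum the squared moduli and invoke the $K^p_s$-frame property of $(f_j, P, a_j, b_j)_{j=1}^n$ applied to the vector $U^* f$, obtaining
\begin{eqnarray*}
\sum_{j=1}^n |\langle f, a_j Q(Uf_j) + b_j (I-Q)(Uf_j)\rangle|^2
= \sum_{j=1}^n |\langle U^* f, a_j P f_j + b_j (I-P) f_j\rangle|^2
= \|K^* U^* f\|^2.
\end{eqnarray*}
Finally, $UK = KU$ gives $K^* U^* = U^* K^*$, and since $U$ is unitary, $\|U^* K^* f\| = \|K^* f\|$. Hence the sum above equals $\|K^* f\|^2$, which is exactly the $K^p_s$-frame identity for $(Uf_j, Q, a_j, b_j)_{j=1}^n$.

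The only nontrivial step is the algebraic manipulation $(I-Q)U = U(I-P)$, which is a direct consequence of $UP = QU$; all other steps are routine applications of the hypotheses. I do not anticipate a real obstacle here, since the statement is essentially a unitary-invariance property of $K^p_s$-frames once the two commutations are in place.
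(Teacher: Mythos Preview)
Your proof is correct and follows essentially the same approach as the paper: both arguments use $UP=QU$ (hence $(I-Q)U=U(I-P)$) to rewrite the inner products in terms of $U^*f$, apply the $K^p_s$-frame identity for $(f_j,P,a_j,b_j)_{j=1}^n$ at $U^*f$, and then use $UK=KU$ together with unitarity of $U$ to conclude $\|K^*U^*f\|=\|K^*f\|$.
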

\begin{proof}
Let $(f_j, P, a_j, b_j)_{j=1}^n$ be a $K^p_s$-frame. Then for every $f \in \Bbb{R}^n$, we obtain
\begin{eqnarray*}
\sum_{j=1}^n | \langle f, a_j Q \, Uf_j + b_j (I-Q) \, Uf_j \rangle |^2&=& \sum_{j=1}^n | \langle f, a_j U\,P f_j + b_j U \, (I-P) f_j \rangle |^2 \\
&=& \sum_{j=1}^n | \langle U^* f, a_j P f_j + b_j (I-P) f_j \rangle |^2 \\
&=& \|K^* \, U^* f \|^2 \\
&=& \| U^* \, K^* f\| ^2\\
&=& \langle U \, U^* K^*f, K^*f \rangle \\
&=& \|K^*f\|^2.
\end{eqnarray*}
So,  $(Uf_j, Q, a_j, b_j)_{j=1}^n$ is a $K^p_s-$frame.
\end{proof}
\section{Compliance with ethical standards}
\textbf{Conflict of interest:} All authors declare that they have no conflict of
interest.\\
\textbf{Data Availability Statement:} No data sets were generated or analyzed during the current study.

\end{document}